 \newtheorem{thm}{Theorem}[section]
 \newtheorem{cor}[thm]{Corollary}
 \newtheorem{lem}[thm]{Lemma}
 \newtheorem{exam}[thm]{Example}
 \theoremstyle{definition}
 \newtheorem{defn}[thm]{Definition}
 \theoremstyle{remark}
 \numberwithin{equation}{section}
\begin{document}

\title
{A mean value inequality for the generalized self-expander type submanifolds and its application}

\author{Liang Cheng}

\dedicatory{}
\date{}

 \subjclass[2000]{
Primary 53C44; Secondary 53C42, 57M50.}

\thanks{Liang Cheng's Research partially supported by the Natural Science
Foundation of China 11201164,11171126,11301400 and a scholarship from the China Scholarship Council 201308420219.}
\keywords{}

 \maketitle

\begin{abstract}
In this paper we get a version of  mean value inequality for generalized self-expander type submanifolds  in Euclidean space.
As the application, we prove that if  mean curvature flow $M(t)$ on the self-expander in Euclidean space subconverges to an $n$-rectifiable varifold $T$ in weak sense for $t$ goes to the singular time,
then $T$ must be the cone.
\end{abstract}

\section{introduction}

Let $x_0:M^n\to \mathbb{R}^{n+m}$ be a complete smooth immersed submanifold in Euclidean space. Consider the mean curvature
flow
\begin{equation}\label{mcf}
\frac{\partial x}{\partial t}=\vec{\mathbf{H}},
\end{equation}
with the initial data $x_0$, where $\vec{\mathbf{H}}=-H\nu$ is the mean curvature vector and $\nu$ is the outer unit
normal vector. The self-similar solutions, including self-shrinkers, translators and self-expanders ,are one of the important subject in the study of mean curvature flow. For other works for studying the  self-similar solutions of the mean curvature flow, one may see \cite{CL}, \cite{CM1},\cite{MV}, and \cite{MV}.
 Recall 
\begin{defn}
 The immersed submanifold $x:M^n\to \mathbb{R}^{n+m}$ is called self-expanders of mean curvature flow if it satisfies
 \begin{equation}\label{self-expander}
 \vec{\mathbf{H}}=\mu(x-p_0)^{\perp},
 \end{equation}
  for some fixed vector $p_0\in \mathbb{R}^{n+m}$ and nonnegative constant $\mu\geq 0$.
\end{defn}
\noindent The mean curvature flow $x(\cdot,t)$ on the self-expander (\ref{self-expander}) satisfying
 \begin{equation}\label{self-expander_1}
 x(p,t)=\sqrt{1+2\mu (t-\frac{1}{2\mu})}(x(\phi_t^*(p))-p_0),
 \end{equation}
with $\phi_t$ being the tangent diffeomorphisms on $M^n$ with
$$x(p,\frac{1}{2\mu})=x(\phi_{\frac{1}{2\mu}}^*(p)),$$
and $Dx(\frac{\partial}{\partial t}\phi_t^*(p))=-\frac{\mu}{1+2\mu(t-\frac{1}{2\mu})}(x(\phi_t^*(p))-p_0)^T$.

Notice that the mean curvature flow always blows up at finite time.
For noncompact hypersurfaces, solution to the mean curvature flow may exist for all times.
For example, Ecker and Huisken \cite{EH1} showed that the mean curvature flow on locally Lipschitz continuous entire graph in
Euclidean space exists for all time.
The self-expanders appear as the singularity model of the mean curvature flow which exists for long time.
For the entire graphs have the bounded gradient and
\begin{equation}\label{growth_condition}
\langle x_0,\nu \rangle^2 \leq c(1+|x_0|^2)^{1-\delta}
\end{equation}
at time $t = 0$, where  $c<\infty$ and $\delta>0$, Ecker and Huisken \cite{EH}
 proved the normalized mean curvature flow
 \begin{equation}\label{normalized_mcf}
\frac{\partial \widetilde{x}}{\partial s}=\vec{\widetilde{ \mathbf{H}}}-\widetilde{x},
\end{equation}
with initial data $x_0$, obtained under the rescaling
\begin{equation}\label{scaling}
\widetilde{x}(\cdot,s)=\frac{1}{\sqrt{2t+1}}x(\cdot,t),\ \  s=\frac{1}{2}\log(2t+1),
\end{equation}
 converges as $s\to \infty$ to a self-expander.

In this paper, we study the following generalized version of self-expanders for the mean curvature flow:
\begin{defn}
 The immersed submanifold $x:M^n\to \mathbb{R}^{n+m}$ is called generalized self-expander type submanifold if it satisfies
 \begin{equation}\label{self-expander}
 \vec{\mathbf{H}}\geq\mu(x-p_0)^{\perp},
 \end{equation}
\end{defn}

We first prove the following mean value inequality.

\begin{thm}\label{thm_21}
Let $x:M^n\to \mathbb{R}^{n+m}$ be the submanifold in the Euclidean space satisfying $\vec{H}\cdot (x-p_0) \geq \mu |(x-p_0)^{\perp}|^2$
for some fixed vector $p_0\in \mathbb{R}^{n+m}$ and nonnegative constant $\mu\geq 0$. Set $M=x(M^n)$.
Then
\begin{align}\label{eq_1}
\frac{d}{dR}(\frac{1}{R^n}\int_{M\cap B_R(p_0)} f)
\geq &\frac{d}{dR}\int_{M\cap  B_R(p_0)}\frac{|(x-p_0)^{\perp}|^2}{|x-x_0|^2}f+\frac{1}{2R^{n+1}}\int_{M\cap B_R(p_0)}(R^2-|x-p_0|^2)\Delta f \nonumber\\
&+\frac{\mu}{R^{n+1}}\int_{M\cap B_R(p_0)} |(x-p_0)^{\perp}|^2f,
\end{align}
for any smooth nonnegative function $f$ on $M$.
Moreover, the equality of (\ref{eq_1}) holds if and only if $x$ satisfies $\vec{H}\cdot (x-p_0) = \mu |(x-p_0)^{\perp}|^2$.
\end{thm}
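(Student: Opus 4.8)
The plan is to translate so that $p_0=0$ and then derive an \emph{exact} monotonicity identity, of which (\ref{eq_1}) is simply the inequality obtained by invoking the hypothesis in a single place. Two standard ingredients drive the computation. The first is the position-vector identity on a submanifold: writing $r=|x|$ and decomposing $x=x^T+x^{\perp}$ into tangential and normal parts, a component-wise computation using $\Delta_M x=\vec{H}$ (here $\Delta_M$ is the intrinsic Laplacian, the $\Delta$ of the statement) and $\sum_\alpha|\nabla_M x^\alpha|^2=n$ gives
\[
\Delta_M |x|^2 = 2n + 2\,\vec{H}\cdot x .
\]
The second is the coarea formula together with $\nabla_M r = x^T/|x|$, whence $|\nabla_M r|=|x^T|/|x|$; this lets me differentiate interior integrals in $R$ and write the results as boundary integrals over $M\cap\partial B_R$. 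In particular, with $A(R)=\int_{M\cap B_R} f$ I get $A'(R)=R\int_{M\cap\partial B_R} f/|x^T|\,d\mathcal H^{n-1}$, and the $R$-derivative of $\int_{M\cap B_R}|x^{\perp}|^2|x|^{-2}f$ equals $R^{-1}\int_{M\cap\partial B_R}|x^{\perp}|^2 f/|x^T|\,d\mathcal H^{n-1}$.

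The heart of the argument is Green's second identity on $M\cap B_R$ applied to the pair $u=R^2-|x|^2$ and $v=f$. The comparison function $u$ is chosen precisely because it vanishes on $M\cap\partial B_R$, so the only surviving boundary term is $-\int_{M\cap\partial B_R} f\,\partial_\nu u$; since the outward conormal is $\nu=x^T/|x^T|$ one computes $\partial_\nu u=-2|x^T|$, and using $\Delta_M u=-(2n+2\,\vec{H}\cdot x)$ the identity becomes
\[
\int_{M\cap B_R}(R^2-|x|^2)\Delta_M f + 2\int_{M\cap B_R} f\,(n+\vec{H}\cdot x) = 2\int_{M\cap\partial B_R} f\,|x^T|\,d\mathcal H^{n-1}.
\]
I then exploit the elementary relation $|x^T|^2=R^2-|x^{\perp}|^2$, valid on $\partial B_R$, to split $f|x^T|=(R^2 f-|x^{\perp}|^2 f)/|x^T|$, which converts the boundary integral into a combination of $A'(R)$ and the $R$-derivative of the weighted density $\int |x^{\perp}|^2|x|^{-2}f$ through the coarea expressions above.

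Solving this relation for $A'(R)$ and substituting into $\frac{d}{dR}(R^{-n}A)=R^{-n}A'(R)-nR^{-n-1}A(R)$ produces the key cancellation: the term $n\int_{M\cap B_R} f$ coming from the $2n$ in the position-vector identity exactly cancels the $-nR^{-n-1}A(R)$ coming from differentiating the normalizing power $R^{-n}$. What remains is the exact identity
\[
\frac{d}{dR}\Big(\frac{1}{R^n}\int_{M\cap B_R} f\Big)=\frac{1}{R^n}\frac{d}{dR}\int_{M\cap B_R}\frac{|x^{\perp}|^2}{|x|^2}f+\frac{1}{2R^{n+1}}\int_{M\cap B_R}(R^2-|x|^2)\Delta_M f+\frac{1}{R^{n+1}}\int_{M\cap B_R} f\,(\vec{H}\cdot x),
\]
where, by the coarea computation, the first term on the right equals $\frac{d}{dR}\int_{M\cap B_R}|x^{\perp}|^2|x|^{-(n+2)}f$, matching the weighted density term in (\ref{eq_1}). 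The inequality then follows from the single estimate $\vec{H}\cdot x\ge\mu|x^{\perp}|^2$, multiplied by $f\ge 0$ and integrated, applied only to the last term; consequently equality holds precisely when $f\,(\vec{H}\cdot x-\mu|x^{\perp}|^2)\equiv 0$, i.e. (letting $f$ and $R$ range) if and only if $\vec{H}\cdot x=\mu|x^{\perp}|^2$, which is the asserted characterization after undoing the translation $x\mapsto x-p_0$.

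I expect the obstacles to be technical rather than conceptual: one must justify Green's identity and the coarea formulas for a.e.\ $R$, for which $M\cap\partial B_R$ is a smooth hypersurface in $M$ by Sard's theorem (transversality of the sphere to $M$), and one must handle the fact that $A(R)$ is only differentiable for a.e.\ $R$ while keeping careful track of the $|x^T|$ factors. A clean way to sidestep the boundary regularity entirely is to replace the characteristic function of $B_R$ by a smooth radial cutoff $\gamma(|x|^2/R^2)$, carry out the same integration by parts, and let $\gamma$ increase to the indicator at the end.
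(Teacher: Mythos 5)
Your argument is correct and is essentially the paper's own proof in repackaged form: the paper applies the divergence theorem to the field $\frac{x}{R}f$, uses the coarea formula together with $1-|\nabla |x||^2=|x^{\perp}|^2/|x|^2$, and then integrates $x^T\cdot \nabla f$ by parts via $x^T=-\frac{1}{2}\nabla (R^2-|x|^2)$ --- which is exactly your single Green's identity with comparison function $u=R^2-|x|^2$ carried out in two steps, except that the paper inserts the hypothesis $\vec{H}\cdot x\geq \mu |x^{\perp}|^2$ mid-computation while you derive an exact identity first and apply it once at the end, which makes your treatment of the equality case cleaner. One point genuinely in your favor: your bookkeeping retains the factor $R^{-n}$ on the density term, i.e.\ you correctly identify the first right-hand term as $\frac{d}{dR}\int_{M\cap B_R}|x^{\perp}|^2|x|^{-(n+2)}f$, whereas the paper's final display (and the statement of (\ref{eq_1}) itself) silently drops the $R^{-n}$ and writes $\frac{d}{dR}\int_{M\cap B_R}\frac{|x^{\perp}|^2}{|x|^2}f$, which overstates the inequality when $R>1$ since that term is nonnegative --- so your version is the corrected form of (\ref{eq_1}).
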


Next we give an application to the mean value inequality (\ref{eq_1}).
Recall that if $x_0$ is the graphical cone, then the solution to the mean curvature flow (\ref{mcf}) must be the self-expander.
The argument is this (see \cite{I3}): the rescaled
flow $x_j(\cdot,t)=\sqrt{\lambda_j}x(\cdot,\lambda_j^{-1} t)$ solves (\ref{mcf}) with the same initial condition $x_0$ since $x_0$ is a cone, so it must be equal to $x(\cdot,t)$ by the uniqueness of the solution of graphically initial data (if the uniqueness fails, then one may not have the mean curvature flow coming out of cone is the self-expander,see \cite{ACI} ).
On the contrary, a natural question is that on what conditions the corresponding mean curvature flow (\ref{self-expander_1}) for the self-expanders converges
to the cone as $t\to 0$?

\begin{thm}\label{m_1}
 Let $x:M^n\to \mathbb{R}^{n+m}$ be the self-expander (\ref{self-expander}) in the Euclidean space. Let $x(\cdot,t)$ be the corresponding solution (\ref{self-expander_1}) to mean curvature flow for the self-expander $x$. Set $M(t)=x(M^n,t)$. If $M(t)$
  subconverges  to an $n$-rectifiable varifold $T$ in weak sense for some sequence $t
  _j\to 0$, then $T$ must be a cone.
\end{thm}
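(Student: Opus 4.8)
The plan is to extract a monotonicity formula from Theorem~\ref{thm_21} and then transfer it to the weak limit by exploiting the self-similar scaling built into (\ref{self-expander_1}). Throughout assume $\mu>0$, so that (\ref{self-expander_1}) is defined; there the scaling factor simplifies to $\sqrt{1+2\mu(t-\frac{1}{2\mu})}=\sqrt{2\mu t}=:\lambda(t)$, so after discarding the tangential reparametrization $\phi_t$ (which does not affect the image) one has $M(t)=\lambda(t)\,(M-p_0)$ with $\lambda(t)\to 0$ as $t\to 0$. Thus the family $M(t_j)$ is precisely the blow-down of the self-expander $M-p_0$ at the scales $\lambda_j:=\lambda(t_j)\to 0$.

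First I would apply Theorem~\ref{thm_21} with $f\equiv 1$. Since $\Delta 1=0$ and the two remaining terms on the right-hand side of (\ref{eq_1}) are nonnegative (a derivative of the increasing integral of a nonnegative integrand, and the explicit $\mu$-term), this gives that the density ratio
\[
\theta(R):=\frac{1}{R^n}\,\mathcal{H}^n\!\left(M\cap B_R(p_0)\right)
\]
is nondecreasing in $R$. A direct computation with the dilation $y=\lambda_j(q-p_0)$ (using that $\mathcal{H}^n$ scales by $\lambda_j^{n}$ and $B_R(0)$ pulls back to $B_{R/\lambda_j}(p_0)$) yields the scaling identity $R^{-n}\mathcal{H}^n(M(t_j)\cap B_R(0))=\theta(R/\lambda_j)$. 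As $\lambda_j\to 0$ the argument $R/\lambda_j\to\infty$, so by monotonicity $\theta(R/\lambda_j)\uparrow\theta_\infty:=\lim_{\rho\to\infty}\theta(\rho)$; combined with the weak convergence $M(t_j)\to T$ (which gives $R^{-n}\mathcal{H}^n(M(t_j)\cap B_R(0))\to R^{-n}\|T\|(B_R(0))$ for a.e.\ $R$), I conclude that $R^{-n}\|T\|(B_R(0))=\theta_\infty$ is independent of $R$. In particular $\theta_\infty<\infty$ and $T$ has constant density ratio about the origin.

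Next I would feed this constancy back into the monotonicity formula, now applied to the rescaled surfaces themselves. Each $M(t_j)$ is again a self-expander—relative to the origin and with constant $\mu_j=\frac{1}{2t_j}$—so Theorem~\ref{thm_21} (with $p_0=0$, $f\equiv1$) gives
\[
\frac{d}{dr}\Big(r^{-n}\mathcal{H}^n(M(t_j)\cap B_r(0))\Big)\ \geq\ \frac{d}{dr}\int_{M(t_j)\cap B_r(0)}\frac{|y^{\perp}|^2}{|y|^2}\,d\mathcal{H}^n\ +\ \frac{\mu_j}{r^{n+1}}\int_{M(t_j)\cap B_r(0)}|y^{\perp}|^2\,d\mathcal{H}^n.
\]
Integrating from $r_1$ to $r_2$ and using that the left-hand side equals $\theta(r_2/\lambda_j)-\theta(r_1/\lambda_j)\to 0$, both nonnegative terms on the right have vanishing increments as $j\to\infty$. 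The scale-invariant defect $|y^{\perp}|^2/|y|^2$ is a bounded continuous function on the Grassmann bundle away from the origin, so weak varifold convergence lets me pass the increment to the limit and conclude that $r\mapsto\int_{T\cap B_r(0)}|y^{\perp}|^2/|y|^2\,d\|T\|$ is constant; since it tends to $0$ as $r\to 0$, it vanishes identically, that is
\[
\int_{T}\frac{|y^{\perp}|^2}{|y|^2}\,d\|T\|=0,\qquad\text{so}\qquad y^{\perp}=0\quad \|T\|\text{-a.e.}
\]
The vanishing of the normal component of the position field $y$ says the dilation field is tangent to $T$, which for a rectifiable varifold is exactly the statement that $T$ is a cone with vertex at the origin.

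The step I expect to be the main obstacle is the passage to the limit of the defect integral near the vertex: the integrand $|y^{\perp}|^2/|y|^2$ is discontinuous at $y=0$, and $T$ may be singular there. I would handle this by excising a small ball $B_\varepsilon(0)$—on $B_r\setminus B_\varepsilon$ the integrand is continuous on the Grassmann bundle and weak convergence applies verbatim—while controlling the excised contribution by the uniform density bound coming from monotonicity, which forces $\mathcal{H}^n(M(t_j)\cap B_\varepsilon(0))=O(\varepsilon^n)$ against an integrand bounded by $1$; letting $\varepsilon\to 0$ then closes the argument. The only other point requiring care is the standard fact that $y^{\perp}=0$ $\|T\|$-a.e.\ upgrades to dilation invariance of the rectifiable varifold $T$.
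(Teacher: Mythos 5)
Your proposal is correct, and it reaches the same destination as the paper by a noticeably different route through the key lemma. The paper (Lemma \ref{thm_converges}) transfers the structure of the problem to the limit \emph{first}: it tests the first variation of the limit varifold $T$ against the radial field $\eta(r)x$, uses varifold convergence together with $\vec{H}\cdot x\geq 0$ to get $\int \mathrm{div}_\omega(\eta(r)x)\,dT\leq 0$, and from this derives an intrinsic monotonicity formula \eqref{3_1} for $T$ itself (mimicking the stationary-varifold argument); only then does it invoke Corollary \ref{cor_2} with $\mu=0$ and the blow-down identity $M(t_j)=\sqrt{2\mu t_j}\,(M-p_0)$ to see that the density ratio of $T$ is constant, which forces the defect term in \eqref{3_1} to vanish. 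You instead never prove any monotonicity formula for $T$: you apply Theorem \ref{thm_21} (equivalently Corollary \ref{cor_2}) to each rescaled surface $M(t_j)$, which is itself a self-expander with parameter $\mu_j=1/(2t_j)$, observe that the increments $\theta(r_2/\lambda_j)-\theta(r_1/\lambda_j)$ of the density ratio tend to zero, and pass the nonnegative scale-invariant defect $|y^{\perp}|^2/|y|^2$ through the weak varifold convergence by lower semicontinuity on annuli avoiding the origin. What your version buys is the elimination of the first-variation/cutoff computation on the limit varifold (and you do not even need your $\varepsilon$-excision worry: vanishing of the defect on every annulus $B_{r_2}\setminus \bar B_{r_1}$ with $r_1>0$ already gives $y^{\perp}=0$ for $\|T\|$-a.e.\ $y\neq 0$, using a continuous cutoff supported in the open annulus and monotone convergence); what the paper's version buys is a genuine monotonicity formula \eqref{3_1} for $T$, which is a stronger intermediate statement valid under only $\vec H\cdot(x-p_0)\geq 0$ and reusable beyond this theorem. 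Both arguments require honest varifold convergence (not merely convergence of the weight measures), since the defect integrand, respectively the test function $\mathrm{div}_\omega(\eta(r)x)$, depends on the tangent plane $\omega$; and both leave the same final step at the same level of detail, namely the standard fact that $y^{\perp}=0$ $\|T\|$-a.e.\ upgrades a rectifiable varifold with constant density ratio to a cone. Your explicit restriction to $\mu>0$ is also a point in your favor: the paper's normalization \eqref{self-expander_1} is degenerate at $\mu=0$, where the conclusion would in any case fail for a non-conical minimal submanifold, so this hypothesis is implicitly needed in the paper as well.
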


The structure of this paper is as follows. In section 2, we give the proof of Theorem \ref{thm_21}. Then we give two corollaries of  Theorem \ref{thm_21} as the direct applications to Theorem \ref{thm_21}.
In section 3, we give the proof of Theorem \ref{m_1}.

\section{monotonicity formula and mean value inequality}

First we give the proof of Theorem \ref{thm_21}.

\begin{proof}[Proof of Theorem \ref{thm_21}]
With losing of generality, we may assume $p_0=0$ and $B_R(p_0)=B_R(0)$.
We have
\begin{align}\label{eq_11}
\int_{M\cap B_R(p_0)} div(\frac{x}{R}f)&=-\int_{M\cap B_R(p_0)}\vec{H}\cdot \frac{x}{R}f +\int_{M\cap \partial B_R(p_0)} \frac{x}{R}f\cdot \nu\nonumber\\
&\leq -\mu\int_{M\cap B_R(p_0)} \frac{|x^{\perp}|^2}{R}f +\int_{M\cap \partial B_R(p_0)} f |\nabla |x||,
\end{align}
where we use $\nu=\frac{x^T}{|x^T|}$ and $ \frac{x}{R}\cdot \nu=|\nabla |x||$. Then by coarea formula
we have
\begin{align*}
\frac{d}{dR}\int_{M\cap B_R(p_0)} f&=\int_{M\cap \partial B_R}\frac{f}{|\nabla |x||}\\
\geq&\int_{M\cap \partial B_R}f\frac{1-|\nabla |x||^2}{|\nabla |x||}+\int_{M\cap B_R(p_0)} div(\frac{x}{R}f)+\mu\int_{M\cap B_R(p_0)} \frac{|x^{\perp}|^2}{R}f\\
=&\frac{d}{dR}\int_{M\cap  B_R}f\frac{|x^{\perp}|^2}{|x|^2} +\frac{n}{R}\int_{M\cap B_R(p_0)}f+ \int_{M\cap B_R(p_0)}\frac{x^T}{R}\cdot \nabla f\\
&+\mu\int_{M\cap B_R(p_0)} \frac{|x^{\perp}|^2}{R}f,
\end{align*}
where we use $1-|\nabla|x||^2=\frac{|x^{\perp}|^2}{|x|^2}$. Since $x^T=-\frac{1}{2}\nabla (R^2-|x|^2)$,
we conclude that
\begin{align*}
\frac{d}{dR}\int_{M\cap B_R(p_0)} f
\geq&\frac{d}{dR}\int_{M\cap  B_R}f\frac{|x^{\perp}|^2}{|x|^2} +\frac{n}{R}\int_{M\cap B_R(p_0)}f+\frac{1}{2R}\int_{M\cap B_R(p_0)}(R^2-|x|^2)\Delta f \\
&+\mu\int_{M\cap B_R(p_0)} \frac{|x^{\perp}|^2}{R}f.
\end{align*}
It follows that
\begin{align*}
\frac{d}{dR}(\frac{1}{R^n}\int_{M\cap B_R(p_0)} f)
\geq&\frac{d}{dR}\int_{M\cap  B_R}f\frac{|x^{\perp}|^2}{|x|^2}+\frac{1}{2R^{n+1}}\int_{M\cap B_R(p_0)}(R^2-|x|^2)\Delta f \\
&+\frac{\mu}{R^{n+1}}\int_{M\cap B_R(p_0)} |x^{\perp}|^2f.
\end{align*}
In view of (\ref{eq_11}), the equality of (\ref{eq_1}) holds if and only if $\int_{M\cap B_R(p_0)}\vec{H}\cdot \frac{x}{R}f=\mu\int_{M\cap B_R(p_0)} \frac{|x^{\perp}|^2}{R}f$ for any smooth function $f$ on $M$. It follows that (\ref{eq_1}) holds if and only if $x$ satisfying $\vec{H}\cdot (x-p_0) = \mu |(x-p_0)^{\perp}|^2$.
\end{proof}

As the corollary to Theorem \ref{thm_21}, we have the following mean value inequality.

\begin{cor}\label{cor_1}
Let $x:M^n\to \mathbb{R}^{n+m}$ be the submanifold in the Euclidean space satisfying and $\vec{H}\cdot (x-p_0) \geq \mu |(x-p_0)^{\perp}|^2$
for some fixed vector $p_0\in \mathbb{R}^{n+m}$ and nonnegative constant $\mu\geq 0$. Assuming that $B_{R_0}(p_0)\cap M\neq \emptyset$ with $M=x(M^n)$. If $f$ is a nonnegative function with
$\Delta f\geq -(\lambda+2\mu|(x-p_0)^{\perp}|^2) R_0^{-2}f$, then the function
\begin{equation}\label{eq_21}
  g(R)=e^{\frac{\lambda R}{2 R_0}}\frac{\int_{M\cap B_R(p_0)} f}{R^n}
\end{equation}
is monotone non-decreasing for any $0\leq R\leq R_0$. In particular, if $p_0\in M$, then
\begin{equation}\label{eq_22}
 f(p_0)\leq e^{\frac{\lambda }{2 }}\frac{\int_{M\cap B_{R_0}(p_0)
 } f}{\text{Vol}(B_1\subset \mathbb{R}^n)R_0^n}.
\end{equation}
\end{cor}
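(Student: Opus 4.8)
The plan is to feed the inequality of Theorem~\ref{thm_21} into an ODE comparison for the normalized volume ratio. Following that theorem I would first assume $p_0=0$ and abbreviate $u(R)=R^{-n}\int_{M\cap B_R}f$, so that the assertion ``$g(R)=e^{\lambda R/(2R_0)}u(R)$ is non-decreasing'' is equivalent to the differential inequality $u'(R)\ge -\tfrac{\lambda}{2R_0}u(R)$ on $(0,R_0]$. Theorem~\ref{thm_21} already supplies a lower bound for $u'(R)$, namely
\begin{equation*}
u'(R)\ge \frac{d}{dR}\int_{M\cap B_R}\frac{|x^{\perp}|^2}{|x|^2}f+\frac{1}{2R^{n+1}}\int_{M\cap B_R}(R^2-|x|^2)\Delta f+\frac{\mu}{R^{n+1}}\int_{M\cap B_R}|x^{\perp}|^2 f,
\end{equation*}
so the whole argument reduces to estimating these three terms from below.

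For the first term I would observe that $|x^{\perp}|^2|x|^{-2}f\ge 0$, hence $\int_{M\cap B_R}|x^{\perp}|^2|x|^{-2}f$ is non-decreasing in $R$ and its derivative is non-negative, so it may simply be dropped. For the middle term I would use that on $B_R$ with $R\le R_0$ one has $0\le R^2-|x|^2\le R^2$, together with the hypothesis $\Delta f\ge -(\lambda+2\mu|x^{\perp}|^2)R_0^{-2}f$ and the signs $f\ge 0$, $\lambda\ge 0$, $\mu\ge0$, to get
\begin{equation*}
(R^2-|x|^2)\Delta f\ge -(R^2-|x|^2)(\lambda+2\mu|x^{\perp}|^2)R_0^{-2}f\ge -R^2(\lambda+2\mu|x^{\perp}|^2)R_0^{-2}f.
\end{equation*}
After dividing by $2R^{n+1}$ this contributes $-\tfrac{\lambda}{2R_0^2}R^{1-n}\int_{M\cap B_R}f$ from the $\lambda$-part and $-\tfrac{\mu}{R_0^2}R^{1-n}\int_{M\cap B_R}|x^{\perp}|^2 f$ from the $\mu$-part. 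The key cancellation is between this last quantity and the surviving third term $+\tfrac{\mu}{R^{n+1}}\int_{M\cap B_R}|x^{\perp}|^2 f$: their sum is $\mu(R^{-n-1}-R_0^{-2}R^{1-n})\int_{M\cap B_R}|x^{\perp}|^2 f$, and since $R\le R_0$ gives $R^{-n-1}\ge R_0^{-2}R^{1-n}$ this is non-negative and can be discarded. What remains is $u'(R)\ge -\tfrac{\lambda}{2R_0^2}R^{1-n}\int_{M\cap B_R}f=-\tfrac{\lambda R}{2R_0^2}u(R)$, and one final use of $R\le R_0$ upgrades it to $u'(R)\ge -\tfrac{\lambda}{2R_0}u(R)$, which is precisely the inequality making $g'\ge0$.

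Finally, to obtain the pointwise bound (\ref{eq_22}) I would let $R\to 0$. Since $e^{\lambda R/(2R_0)}\to 1$ and $x$ is a smooth immersed submanifold passing through $p_0$, the Euclidean density at $p_0$ is one, so $R^{-n}\,\text{Vol}(M\cap B_R)\to \text{Vol}(B_1\subset\mathbb{R}^n)$; as $f$ is continuous this yields $g(R)\to f(p_0)\,\text{Vol}(B_1\subset\mathbb{R}^n)$. Monotonicity then gives $g(R_0)\ge \lim_{R\to 0}g(R)$, and writing out $g(R_0)=e^{\lambda/2}R_0^{-n}\int_{M\cap B_{R_0}}f$ recovers (\ref{eq_22}) exactly. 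I expect the only genuinely delicate point to be this small-$R$ limit, where one must invoke the flat tangent-plane behaviour of the smooth immersion (equivalently, unit density at $p_0$) rather than the monotonicity machinery itself; the rest of the argument is bookkeeping of non-negative terms all controlled by the single constraint $R\le R_0$.
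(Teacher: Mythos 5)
Your proposal is correct and follows essentially the same route as the paper: feed the inequality of Theorem~\ref{thm_21} into an ODE comparison for $R^{-n}\int_{M\cap B_R}f$, drop the nonnegative first term, use the hypothesis on $\Delta f$ together with $0\le R^2-|x|^2\le R^2$ and $R\le R_0$ to cancel the $\mu$-terms, and integrate; the paper's proof simply states the resulting inequality $g'(R)\ge -\tfrac{\lambda}{2}R_0^{-2}Rg(R)$ without spelling out this bookkeeping. Your explicit treatment of the small-$R$ limit via unit density at $p_0$ is exactly what the paper's ``follow immediately'' suppresses, so the two arguments coincide in substance.
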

\begin{proof}
  It follows from (\ref{eq_1}) that
  $$g'(R)\geq -\frac{\lambda}{2}R_0^{-2}R^{1-n}\int_{M\cap B_R(p_0)}f=-\frac{\lambda}{2}R_0^{-2}Rg(R).$$
  We have $g'(R)\geq -\frac{\lambda}{2}R_0^{-2}R\geq -\frac{\lambda}{2R_0 }g(R)$. Then (\ref{eq_21}) and (\ref{eq_22}) follow immediately.
\end{proof}

For the another corollary to Theorem \ref{thm_21}, by taking $f=1$ in (\ref{eq_1}), we have

\begin{cor}\label{cor_2}
Let $x:M^n\to \mathbb{R}^{n+m}$ be the submanifold in the Euclidean space satisfying $\vec{H}\cdot (x-p_0) \geq \mu |(x-p_0)^{\perp}|^2$
for some fixed vector $p_0\in \mathbb{R}^{n+m}$ and nonnegative constant $\mu\geq 0$. Then
\begin{align}\label{monotonicity_2}
\frac{d}{dR}(\frac{\mathcal{H}^n(M\cap B_R(p_0))}{R^n})
\geq\frac{d}{dR}\int_{M\cap  B_R(p_0)}\frac{|
(x-p_0)^{\perp}|^2}{|x-p_0|^2}
+\frac{\mu}{R^{n+1}}\int_{M\cap B_R(p_0)} |(x-p_0)^{\perp}|^2,
\end{align}
 with the equality  holds if and only if $x$ satisfies $\vec{H}\cdot (x-p_0) = \mu |(x-p_0)^{\perp}|^2$.
\end{cor}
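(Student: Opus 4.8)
The plan is to read off Corollary \ref{cor_2} as the special case $f \equiv 1$ of Theorem \ref{thm_21}. The constant function $f = 1$ is smooth and nonnegative, hence an admissible test function, so the inequality (\ref{eq_1}) applies to it without any modification.

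First I would substitute $f = 1$ into (\ref{eq_1}) term by term. The left-hand side $\frac{d}{dR}\bigl(\frac{1}{R^n}\int_{M\cap B_R(p_0)} f\bigr)$ becomes $\frac{d}{dR}\bigl(\frac{\mathcal{H}^n(M\cap B_R(p_0))}{R^n}\bigr)$, since $\int_{M\cap B_R(p_0)} 1 = \mathcal{H}^n(M\cap B_R(p_0))$. On the right-hand side the first and third terms reduce to the corresponding expressions with $f$ deleted, whereas the middle term $\frac{1}{2R^{n+1}}\int_{M\cap B_R(p_0)}(R^2-|x-p_0|^2)\Delta f$ vanishes identically because $\Delta 1 = 0$. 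Collecting the surviving terms yields exactly (\ref{monotonicity_2}).

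For the equality assertion I would return to the single place in the proof of Theorem \ref{thm_21} where the hypothesis enters, namely the estimate (\ref{eq_11}) bounding $-\int \vec{H}\cdot\frac{x}{R}f$ by $-\mu\int\frac{|x^\perp|^2}{R}f$. Taking $f=1$, the deficit in that step equals
\[
\frac{1}{R}\int_{M\cap B_R(p_0)}\bigl(\vec{H}\cdot(x-p_0)-\mu|(x-p_0)^\perp|^2\bigr),
\]
whose integrand is nonnegative by assumption. Hence equality in (\ref{monotonicity_2}) for a given $R$ is equivalent to the vanishing of this integral. Imposing equality and letting $R$ range, the integral of one and the same continuous, nonnegative density over $M\cap B_R(p_0)$ vanishes for every $R$, which forces the density to vanish identically; this is precisely $\vec{H}\cdot(x-p_0)=\mu|(x-p_0)^\perp|^2$.

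There is essentially no obstacle here, because everything is inherited from Theorem \ref{thm_21}. The only point needing a word of care is the equality case: one must observe that the single test function $f=1$ already detects the pointwise self-expander equation, which works because the pointwise inequality upgrades the equality of integrals (holding for all $R$) into a pointwise identity.
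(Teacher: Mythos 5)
Your proposal is correct and follows exactly the paper's route: the paper derives Corollary \ref{cor_2} in one line by setting $f=1$ in (\ref{eq_1}), with the middle term dropping out since $\Delta 1=0$. Your additional observation on the equality case --- that with the single test function $f=1$ one must use nonnegativity of the pointwise deficit $\vec{H}\cdot(x-p_0)-\mu|(x-p_0)^\perp|^2$ together with equality for all $R$ to recover the pointwise identity --- is a detail the paper leaves implicit, and it is handled correctly.
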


\section{proof of Theorem \ref{m_1}}
Before the proof of Theorem \ref{m_1}, we need the following lemma.
\begin{lem}\label{thm_converges}
Let $x:M^n\to \mathbb{R}^{n+m}$ be the submanifold in the Euclidean space satisfying $\vec{H}\cdot (x-p_0) \geq 0$
for some fixed vector $p_0\in \mathbb{R}^{n+m}$.
 If $\lambda_j(M-p_0)$  subconverges an $n$-rectifiable $T$ in weak sense for some sequence $\lambda_j\to 0$, then $T$ is an $n$-rectifiable cone.
\end{lem}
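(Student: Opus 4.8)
The plan is to combine the scaling behaviour of the hypothesis with the monotonicity of the density ratio supplied by Corollary \ref{cor_2} (in the case $\mu=0$), and then to push the ``defect'' term to the limit. Without loss of generality assume $p_0=0$. First I would record that the rescaled submanifolds $M_j:=\lambda_j M$ again satisfy the hypothesis of Corollary \ref{cor_2} about the origin: if $\tilde{x}=\lambda_j x$ denotes the position vector of $M_j$, then the mean curvature scales as $\vec{H}_{M_j}=\lambda_j^{-1}\vec{H}_M$, so $\vec{H}_{M_j}\cdot\tilde{x}=\vec{H}_M\cdot x\geq 0$. Hence for every $j$ the monotonicity
\[
\frac{d}{dR}\Big(\frac{\mathcal{H}^n(M_j\cap B_R(0))}{R^n}\Big)\geq \frac{d}{dR}\int_{M_j\cap B_R(0)}\frac{|\tilde{x}^{\perp}|^2}{|\tilde{x}|^2}
\]
holds, and integrating from $\sigma$ to $\rho$ gives
\[
\Theta_{M_j}(\rho)-\Theta_{M_j}(\sigma)\geq \int_{M_j\cap(B_\rho(0)\setminus B_\sigma(0))}\frac{|\tilde{x}^{\perp}|^2}{|\tilde{x}|^2},
\]
where I abbreviate $\Theta_{N}(R):=R^{-n}\mathcal{H}^n(N\cap B_R(0))$.

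Next I would identify the density ratios before and after rescaling. Since $\mathcal{H}^n(\lambda_j A)=\lambda_j^{n}\mathcal{H}^n(A)$, one computes $\Theta_{M_j}(R)=\Theta_{M}(R/\lambda_j)$. Because $R\mapsto\Theta_M(R)$ is monotone non-decreasing (Corollary \ref{cor_2}) and $M_j$ converges weakly to the locally finite-mass varifold $T$, the limit $\Theta_\infty:=\lim_{r\to\infty}\Theta_M(r)$ is finite (indeed $\Theta_{M_j}(R)\to R^{-n}\|T\|(B_R)<\infty$ at continuity radii), and $\lambda_j\to 0$ forces $\Theta_{M_j}(R)=\Theta_M(R/\lambda_j)\to\Theta_\infty$ for every fixed $R>0$. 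Passing to the limit, $\Theta_T(R)=\Theta_\infty$ is constant in $R$; in particular the left-hand side of the integrated inequality above tends to $\Theta_\infty-\Theta_\infty=0$.

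It then remains to pass the right-hand side to the limit. The integrand $(y,S)\mapsto|y-P_S y|^2/|y|^2$, with $P_S$ the orthogonal projection onto the plane $S$, is continuous and bounded on the closed annulus $\overline{B_\rho(0)}\setminus B_\sigma(0)$ for $0<\sigma<\rho$, so weak (varifold) convergence $M_j\to T$ yields convergence of $\int_{M_j\cap(B_\rho\setminus B_\sigma)}|\tilde{x}^{\perp}|^2/|\tilde{x}|^2$ to $\int_{T\cap(B_\rho\setminus B_\sigma)}|y^{\perp}|^2/|y|^2$. Combining with the vanishing of the left-hand side and the nonnegativity of the defect gives $\int_{T\cap(B_\rho\setminus B_\sigma)}|y^{\perp}|^2/|y|^2\,d\|T\|=0$. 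As $0<\sigma<\rho$ are arbitrary, I conclude $y^{\perp}=0$ for $\|T\|$-a.e. $y$, i.e. the position vector lies in the approximate tangent plane of $T$ at almost every point. This is exactly the statement that the radial vector field is tangent to $T$, whose flow consists of dilations about the origin; hence $T$ is invariant under dilations and is therefore an $n$-rectifiable cone.

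The main obstacle is this last passage to the limit of the defect term: it requires that ``convergence in the weak sense'' be strong enough to integrate the tangent-plane-dependent quantity $|y^{\perp}|^2/|y|^2$ against the varifolds, i.e. convergence as Radon measures on the Grassmann bundle rather than mere convergence of the underlying mass measures. If only the weight measures converge, one must instead invoke lower semicontinuity of the defect functional together with the rectifiability of $T$ (so that approximate tangent planes exist $\|T\|$-a.e.) to reach the same conclusion $\int_T|y^{\perp}|^2/|y|^2=0$.
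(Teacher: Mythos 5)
Your proposal is correct, and its skeleton coincides with the paper's: both arguments rest on the scaling identity $\Theta_{\lambda_j M}(R)=\Theta_M(R/\lambda_j)$ together with the monotonicity of Corollary \ref{cor_2} with $\mu=0$ to force the density ratio of $T$ to be a constant, and then conclude conicality from the vanishing of the nonnegative defect term $\int |y^{\perp}|^2/|y|^2$. The one genuine structural difference is where the defect inequality for $T$ is obtained. You apply the smooth monotonicity inequality to each rescaled submanifold $M_j$ and then pass the annulus integrals $\int_{M_j\cap(B_\rho\setminus B_\sigma)}|\tilde{x}^{\perp}|^2/|\tilde{x}|^2$ to the limit, which — as you correctly flag yourself — requires convergence of the varifolds on the Grassmann bundle (or, since the integrand is nonnegative and continuous, lower semicontinuity on open annuli, which is all you actually need since the limit of the left-hand sides is $0$); one should also restrict to radii $\sigma,\rho$ with $\|T\|(\partial B_\sigma)=\|T\|(\partial B_\rho)=0$, a countable exception that is harmless since $\sigma<\rho$ are arbitrary. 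The paper instead proves the monotonicity formula (\ref{3_1}) \emph{directly for the limit varifold} $T$, by testing the first variation with the radial cutoff field $\eta(r)x$: the only limit taken is $\int \mathrm{div}_{\omega}(\eta(r)x)\,dT=\lim_j\int \mathrm{div}_\omega(\eta(r)x)\,dT_j=-\lim_j\int\vec{H}_j\cdot(\eta(r)x)\,d\mu_j\leq 0$ against a single fixed test field (using that $\vec{H}\cdot x\geq 0$ is scale invariant), after which the standard stationary-varifold computation yields (\ref{3_1}) with no further limiting of tangent-plane-dependent quantities. So the paper's route buys a cleaner limit passage (one application of varifold convergence to a fixed continuous integrand, with the defect term generated intrinsically on $T$), while your route is slightly more economical in machinery — it reuses Corollary \ref{cor_2} verbatim on each $M_j$ rather than redoing the first-variation computation at the varifold level — at the cost of the semicontinuity discussion you already identified. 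Your final step (radial vector tangent $\|T\|$-a.e.\ plus constant density implies dilation invariance) is also more explicit than the paper's terse ``$T$ is the cone in view of (\ref{3_1})''.
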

\begin{proof}
With losing of generality, we may assume $p_0=0$ and $B_R(p_0)=B_R(0)$.
We first prove that the varifold $T$ satisfies the following monotonicity formula
\begin{align}\label{3_1}
t^{-n}\mu_T(B_t(0))-s^{-n}\mu_T(B_s(0))\geq \int_{(B_t(0)\backslash B_s(0))\times G(n,n+m)}r^{-n}|\nabla_{\omega^N}r|^2dT(x,\omega),
\end{align}
for any $0<s<t$, where $r=|x|$ and $\omega^N$ denote the orthogonal $m$-plane to $\omega$.
The proof of (\ref{3_1}) is similar to the case of stationary varifolds (see Proposition 3.7 in).

Let $\phi$ be a nonnegative cutoff function with $\phi'(s)\leq 0$ which is one on $[0,\frac{1}{2}]$ and supported on $[0,1]$. Denote $\eta(r)=\phi(\frac{r}{s})$ so that $r\eta'(r)=-s\frac{d}{ds}(\phi(\frac{r}{s}))$.
Since $\lambda_jM$  subconverges to $T$ in weak sense and $\vec{H}\cdot x \geq 0$ on $M$,
we have
$$\int div_{\omega}(\eta(r)x) dT(x,\omega)=\lim\limits_{j\to\infty}\int div_{\omega}(\eta(r)x) dT_j(x,\omega)=-\lim\limits_{j\to\infty}\int \vec{H}_j\cdot (\eta(r)x) d\mu_j\leq 0,$$
where $\vec{H}_T$ is the the mean curvature vector with respect to the varifold $T$
and $\vec{H}_j$ is the the mean curvature vector with respect to the varifold $\lambda_j M$.
Then
\begin{align*}
0&\geq\int div_{\omega}(\eta(r)x) dT(x,\omega)\\
&=\int(n\eta(r)+r\eta'(r))dT(x,\omega)-\int r\eta'(r)|\nabla_{\omega^N}r|^2dT(x,\omega).
\end{align*}
It follows that
\begin{align*}
\int n\phi(\frac{r}{s})-s\frac{d}{ds}(\phi(\frac{r}{s}))dT(x,\omega)\leq -s\int \frac{d}{ds}(\phi(\frac{r}{s}))|\nabla_{\omega^N}r|^2dT(x,\omega).
\end{align*}
Hence
\begin{align*}
\frac{d}{ds}(s^{-n}\int \phi(\frac{r}{s})dT(x,\omega))\geq s^{-n}\frac{d}{ds}\int \phi(\frac{r}{s})|\nabla_{\omega^N}r|^2dT(x,\omega).
\end{align*}
Let $\phi$ increase to the characteristic function of $[0,1]$, we conclude (\ref{3_1}) holds. From Corollary \ref{cor_2} and taking $\mu=0$, we get $t^{-n}\mu_T(B_t(0))\equiv C$. Then $T$ is the cone in view of (\ref{3_1}).
\end{proof}

Now we give the proof of Theorem \ref{m_1}.
\begin{proof}[Proof of Theorem \ref{m_1}]
Since $\phi_t$ are the tangent diffeomorphisms on $M^n$, we
$$M(t)= x(M^n,t)=\sqrt{1+2\mu (t-\frac{1}{2\mu})}(x(\phi_t^*((M^n))-p_0)=\sqrt{1+2\mu (t-\frac{1}{2\mu})}(x(M^n)-p_0).$$
Then Theorem \ref{m_1} holds by Lemma \ref{thm_converges}.
\end{proof}

\thanks{\textbf{Acknowledgement}: Part of the work is done while the author was visiting the  Math Department of Rutgers University. The author would like to thank Professor Xiaochun Rong for his hospitality and Professor Natasa Sesum for her useful talking about mean curvature flow. }

\end{document}